\newtheorem{theorem}{Theorem}[section]
\newtheorem{corollary}[theorem] {Corollary}
\newtheorem{definition}[theorem]{Definition}
\newtheorem{example}[theorem]{Example}
\newtheorem{problem}[theorem]{Problem}
\title{This is the title}
\begin{document}
\hrule\hrule\hrule\hrule\hrule
\vspace{0.3cm}	
\begin{center}
{\bf{p-adic Delsarte-Goethals-Seidel-Kabatianskii-Levenshtein-Pfender Bound}}\\
\vspace{0.3cm}
\hrule\hrule\hrule\hrule\hrule
\vspace{0.3cm}
\textbf{K. Mahesh Krishna}\\
School of Mathematics and Natural Sciences\\
Chanakya University Global Campus\\
NH-648, Haraluru Village\\
Devanahalli Taluk, 	Bengaluru  Rural District\\
Karnataka State, 562 110, India \\
Email: kmaheshak@gmail.com

Date: \today
\end{center}

\hrule\hrule
\vspace{0.5cm}
\textbf{Abstract}: We introduce the notion of p-adic spherical codes (in particular, p-adic kissing number problem). We show that the  one-line proof for a variant of the Delsarte-Goethals-Seidel-Kabatianskii-Levenshtein upper bound for spherical codes, obtained by Pfender \textit{[J. Combin. Theory Ser. A, 2007]}, extends to p-adic Hilbert spaces.

\textbf{Keywords}:  Spherical code, Kissing number, Linear programming, p-adic Hilbert space.

\textbf{Mathematics Subject Classification (2020)}: 52C17, 12J25, 46S10, 11D88.\\

\hrule

\hrule
\section{Introduction}
 Let $d\in \mathbb{N}$ and $\theta \in [0, 2\pi)$. A set $\{\tau_j\}_{j=1}^n$  of unit vectors  in $\mathbb{R}^d$	is said to be \textbf{$(d,n,\theta )$-spherical code} \cite{ZONG} in $\mathbb{R}^d$ if 
 	\begin{align}\label{SCI}
 	\langle \tau_j, \tau_k\rangle\leq \cos \theta , \quad \forall 1\leq j, k \leq n, j \neq k.
 \end{align}
Since 
\begin{align*}
	\langle \tau, \omega\rangle =\frac{2-\|\tau-\omega\|^2}{2}, \quad \forall \tau, \omega \in \mathbb{R}^d, 
\end{align*}
we can rewrite Inequality (\ref{SCI}) as 
\begin{align*}\label{SCN}
	\|\tau_j-\tau_k\|\geq  \sqrt{2(1-\cos \theta)}, \quad \forall 1\leq j, k \leq n, j \neq k.	
\end{align*}	
 Fundamental problem associated with spherical codes is the following.
 \begin{problem}\label{SCP}
 	Given $d$ and $\theta$, what is the maximum $n$ such that there exists a $(d,n,\theta )$-spherical code $\{\tau_j\}_{j=1}^n$  in $\mathbb{R}^d$?
 \end{problem}
 The case $\theta=\pi/3$ is known as the famous \textbf{(Newton-Gregory) kissing number problem}. With extensive efforts from many mathematicians, it is still not completely resolved in every dimension (but resolved in dimensions $d=1$ ($n=2$), $d=2$ ($n=6$), $d=3$ ($n=12$), $d=4$ ($n=24$), $d=8$ ($n=240$), $d=24$ ($n=196560$)) \cite{ANSTREICHER, PFENDER, CASSELMAN, MUSIN1, MUSIN2, ODLYZKOSLOANE,  SCHUTTEVANDERWAERDEN,  PFENDERZIEGLER, BACHOCVELLENTIN, MITTELMANNVALLENTIN, BOYVALENKOVDODUNEKOVMUSIN, BOROCZKY, MAEHARA, GLAZYRIN, LIBERTI, LEECH, KALLALKANWANG, JENSSENJOOSPERKINS, MACHADOFILHO, KUKLIN}. We refer  \cite{DELSARTEGOETHALSSEIDEL, BOYVALENKOVDRAGNEVHARDINSTOYANOVA, BARGMUSIN, BOYVALENKOVDRAGNEVHARDINSTOYANOVA2, ERICSONZINOVIEV, SARDARIZARGAR, MUSIN3, BANNAISLOANE, BACHOCVALLENTIN2, CONWAYSLOANE, COHNJIAOKUMARTORQUATO, SAMORODNITSKY, BOYVALENKOVDANEV, BOYVALENKOVDANEVLANDGEV, SLOANE, BANNAIBANNAI, BOROCZKYGLAZYRIN}
  for more on spherical codes.
 Problem \ref{SCP} has connection even with sphere packing  \cite{COHNZHAO}. Most used method for obtaining upper bounds on spherical codes is the Delsarte-Goethals-Seidel-Kabatianskii-Levenshtein
bound which we recall. Let $n \in \mathbb{N}$ be fixed. The Gegenbauer polynomials are defined inductively as 
\begin{align*}
	G_0^{(n)}(r)&\coloneqq1,\quad \forall r \in [-1,1],\\
	G_1^{(n)}(r)&\coloneqq r,\quad \forall r \in [-1,1],\\
	&\quad\vdots\\
	G_k^{(n)}(r)&\coloneqq\frac{(2k+n-4)rG_{k-1}^{(n)}(r)-(k-1)	G_{k-2}^{(n)}(r)}{k+n-3}, \quad \forall r \in [-1,1], \quad \forall k \geq 2.
\end{align*}
Then the family $\{G_k^{(n)}\}_{k=0}^\infty$ is orthogonal on the interval $[-1,1] $ with respect to the weight 
\begin{align*}
	\rho(r)\coloneqq (1-r^2)^\frac{n-3}{2}, \quad \forall r \in [-1,1].
\end{align*}
\begin{theorem} \cite{DELSARTEGOETHALSSEIDEL, ERICSONZINOVIEV} (\textbf{Delsarte-Goethals-Seidel-Kabatianskii-Levenshtein Linear Programming Bound}) \label{DGS}
	Let $\{\tau_j\}_{j=1}^n$  be a  $(d,n,\theta )$-spherical code in $\mathbb{R}^d$. 
	Let $P$ be a real polynomial satisfying following conditions.
	\begin{enumerate}[\upshape(i)]
		\item $P(r)\leq 0$ for all $-1\leq r\leq \cos \theta$.
		\item Coefficients in the  Gegenbauer expansion 
		\begin{align*}
			P=\sum_{k=0}^{m}a_kG_k^{(n)}
		\end{align*}
	satisfy 
	\begin{align*}
a_0>0, \quad a_k\geq 0, ~\forall 1\leq k \leq m.
	\end{align*}
	\end{enumerate}
Then
\begin{align*}
	n \leq \frac{P(1)}{a_0}.
\end{align*}
\end{theorem}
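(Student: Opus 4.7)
The natural plan is a two-way evaluation of the double sum
\[
S \;\coloneqq\; \sum_{i=1}^{n}\sum_{j=1}^{n} P(\langle \tau_i,\tau_j\rangle),
\]
bounding it from below via the Gegenbauer expansion of $P$ and from above via the sign hypothesis on $P$. I would first split $S$ into diagonal and off-diagonal parts: since each $\tau_j$ is a unit vector, $\langle \tau_j,\tau_j\rangle=1$, so the diagonal contributes $n\,P(1)$, while for $i\neq j$ the spherical-code inequality gives $-1\le \langle \tau_i,\tau_j\rangle\le\cos\theta$, hence by hypothesis (i) each off-diagonal term is $\le 0$. This yields the clean upper bound $S\le n\,P(1)$.

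For the lower bound I would substitute the Gegenbauer expansion $P=\sum_{k=0}^{m} a_k G_k^{(n)}$ to obtain
\[
S \;=\; \sum_{k=0}^{m} a_k \sum_{i=1}^{n}\sum_{j=1}^{n} G_k^{(n)}(\langle \tau_i,\tau_j\rangle).
\]
The key classical fact I would invoke is the addition formula / positive-definiteness of Gegenbauer polynomials on the sphere: for every $k\ge 0$, the Gram-type matrix $\bigl[G_k^{(n)}(\langle \tau_i,\tau_j\rangle)\bigr]_{i,j}$ is positive semidefinite, so $\sum_{i,j} G_k^{(n)}(\langle \tau_i,\tau_j\rangle)\ge 0$. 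Combined with $a_k\ge 0$ for $k\ge 1$, only the $k=0$ term is forced to contribute, and since $G_0^{(n)}\equiv 1$ that term equals $a_0 n^2$. Hence $S\ge a_0 n^2$.

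Comparing the two bounds gives $a_0 n^2\le n\,P(1)$, and dividing by $a_0>0$ (using hypothesis (ii)) yields $n\le P(1)/a_0$, which is the desired inequality.

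The only nontrivial ingredient — and the one I would flag as the conceptual heart of the argument — is the positive-definiteness of Gegenbauer polynomials under composition with inner products of unit vectors; everything else is elementary rearrangement and sign-checking. In the $p$-adic extension later in the paper, this is exactly the ingredient that will need a suitable analogue, whereas the double-counting/sign structure of the argument should carry over essentially verbatim.
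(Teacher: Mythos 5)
Your argument is correct and is precisely the classical proof of this bound: the paper itself states Theorem \ref{DGS} only as a quoted result from the cited references and gives no proof, but your double-counting of $\sum_{i,j}P(\langle\tau_i,\tau_j\rangle)$, combined with the positive semidefiniteness of the matrices $[G_k^{(n)}(\langle\tau_i,\tau_j\rangle)]_{i,j}$, is exactly the standard argument found there. You also correctly identify the positive-definiteness of the Gegenbauer kernels as the one nontrivial ingredient, which is indeed what the paper's Pfender-style variants replace by the hypothesis $\sum_{j,k}\phi(\cdot)\geq 0$ so that only the elementary sign-splitting survives.
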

 In 2007, Pfender gave a one-line proof for a variant of Theorem  \ref{DGS}.
 \begin{theorem}  \cite{PFENDER} (\textbf{Delsarte-Goethals-Seidel-Kabatianskii-Levenshtein-Pfender Bound}) \label{PFENDERT}
 	Let $\{\tau_j\}_{j=1}^n$  be a  $(d,n,\theta )$-spherical code in $\mathbb{R}^d$. Let $c>0$ and $\phi:[-1,1]\to \mathbb{R}$ be a function satisfying following.
 	\begin{enumerate}[\upshape(i)]
 		\item 
 		\begin{align*}
 			\sum_{j=1}^n\sum_{k =1}^n\phi(\langle \tau_j, \tau_k\rangle)\geq 0.	
 		\end{align*}
 		\item $\phi(r)+c\leq 0$ for all $-1\leq r \leq \cos \theta$.
 	\end{enumerate}
 Then 
 \begin{align*}
 	n\leq \frac{\phi(1)+c}{c}.
 \end{align*}
In particular, if $\phi(1)+c\leq 1$, then $n\leq 1/c$.
 \end{theorem}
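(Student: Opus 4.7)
The plan is to split the double sum in hypothesis (i) along the diagonal and use hypothesis (ii) to bound the off-diagonal terms, then solve the resulting linear inequality for $n$. This mirrors Pfender's one-line argument; no $p$-adic machinery is involved because the statement is about a real spherical code and a real-valued function $\phi$.

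First I would write
\begin{align*}
0 \leq \sum_{j=1}^{n}\sum_{k=1}^{n}\phi(\langle \tau_j,\tau_k\rangle)
= \sum_{j=1}^{n}\phi(\langle \tau_j,\tau_j\rangle) + \sum_{\substack{j,k=1\\ j\neq k}}^{n}\phi(\langle \tau_j,\tau_k\rangle),
\end{align*}
invoking hypothesis (i). Since the $\tau_j$ are unit vectors, each diagonal term equals $\phi(1)$, contributing $n\phi(1)$.

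Next I would bound the off-diagonal piece. For $j\neq k$ the spherical code condition gives $\langle \tau_j,\tau_k\rangle \leq \cos\theta$, so $\langle \tau_j,\tau_k\rangle \in [-1,\cos\theta]$, and hypothesis (ii) yields $\phi(\langle \tau_j,\tau_k\rangle) \leq -c$. There are $n(n-1)$ such pairs, so their contribution is at most $-cn(n-1)$. Substituting back gives
\begin{align*}
0 \leq n\phi(1) - cn(n-1),
\end{align*}
and dividing by $n$ (which is positive) and rearranging produces $c(n-1)\leq \phi(1)$, i.e.\ $n \leq \phi(1)/c + 1 = (\phi(1)+c)/c$, as claimed. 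The ``in particular'' clause follows immediately: if $\phi(1)+c \leq 1$, then $n \leq 1/c$.

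There is no real obstacle here — the argument is essentially algebraic bookkeeping once the sum is split. The only thing to watch is that $c>0$ so that dividing and the sign in ``$\leq -c$'' go in the intended direction, and that the diagonal inner products genuinely equal $1$ (which is exactly the spherical/unit-norm hypothesis). Everything else is a single inequality chain.
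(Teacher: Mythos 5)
Your proof is correct and is essentially the same argument the paper uses: the paper cites this theorem to Pfender without reproving it, but its proof of the p-adic analogue (Theorem \ref{PDGSP}) is exactly your diagonal/off-diagonal split, just phrased via the shift $\psi = \phi + c$ so that the inequality reads $cn^2 \leq n(\phi(1)+c)$ instead of $0 \leq n\phi(1) - cn(n-1)$ --- these are algebraically identical. No substantive difference.
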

In this paper, we introduce the notion of p-adic spherical codes. We show that Theorem  \ref{PFENDERT},  can be easily extended for p-adic Hilbert spaces.

 \section{p-adic spherical Codes}
We begin from  the   definition  of  p-adic Hilbert space.  
\begin{definition}\cite{DIAGANABOOK, KALISCH, DIAGANARAMAROSON} \label{PADICDEF}
	Let $\mathbb{K}$ be a non-Archimedean complete valued field (with valuation $|\cdot|$) and $\mathcal{X}$ be a non-Archimedean Banach space (with norm $\|\cdot\|$) over $\mathbb{K}$. We say that $\mathcal{X}$ is a \textbf{p-adic Hilbert space} if there is a map (called as p-adic inner product) $\langle \cdot, \cdot \rangle: \mathcal{X} \times \mathcal{X} \to \mathbb{K}$ satisfying following.
	\begin{enumerate}[\upshape (i)]
		\item If $x \in \mathcal{X}$ is such that $\langle x,y \rangle =0$ for all $y \in \mathcal{X}$, then $x=0$.
		\item $\langle x, y \rangle =\langle y, x \rangle$ for all $x,y \in \mathcal{X}$.
		\item $\langle \alpha x+y, z \rangle =\alpha \langle x,  z \rangle+\langle y,z\rangle$ for all  $\alpha  \in \mathbb{K}$, for all $x,y,z \in \mathcal{X}$.
		\item $|\langle x, y \rangle |\leq \|x\|\|y\|$ for all $x,y \in \mathcal{X}$.
	\end{enumerate}
\end{definition}
Following is the  standard example which we consider in the paper.
\begin{example}\cite{KALISCH}
	Let $p$ be a prime. For $d \in \mathbb{N}$, let $\mathbb{Q}_p^d$ be the standard p-adic Hilbert space equipped with the inner product 
	\begin{align*}
		\langle (a_j)_{j=1}^d,(b_j)_{j=1}^d\rangle := \sum_{j=1}^da_jb_j,  \quad \forall (a_j)_{j=1}^d,(b_j)_{j=1}^d \in \mathbb{Q}_p^d
	\end{align*}
	and norm 
	\begin{align*}
		\|(x_j)_{j=1}^d\|:= \max_{1\leq j \leq d}|x_j|, \quad \forall (x_j)_{j=1}^d\in 	\mathbb{Q}_p^d.
	\end{align*}
\end{example}
We introduce p-adic spherical codes  as follows. 
\begin{definition}\label{PCODEDEFINITION}
	Let $d\in \mathbb{N}$ and $\theta \in [0, 2 \pi)$. A set $\{\tau_j\}_{j=1}^n$  of vectors  in $ \mathbb{Q}_p^d$	is said to be \textbf{p-adic $(d,n,\theta)$-spherical code}  in $\mathbb{Q}_p^d$ if following conditions hold.
	\begin{enumerate}[\upshape(i)]
		\item $	\|\tau_j\|=1$ for all  $1\leq j \leq n$.
		\item $	\langle \tau_j, \tau_j\rangle=1$ for all  $1\leq j \leq n$.
		\item  
		\begin{equation}\label{PE}
			|2-2\langle \tau_j, \tau_k\rangle|\geq 2(1-\cos \theta), \quad \forall 1\leq j, k \leq n, j \neq k.
		\end{equation}
		\end{enumerate}
		We call the  case $\theta=\pi/3$  as the \textbf{p-adic kissing number problem}.
\end{definition}
Let $\{\tau_j\}_{j=1}^n$  be a   p-adic $(d,n,\theta )$-spherical code  in $\mathbb{Q}_p^d$. 
Since 
\begin{align*}
	\|\tau_j-\tau_k\|^2\geq |\langle \tau_j-\tau_k,  \tau_j-\tau_k\rangle|=|2-2\langle \tau_j, \tau_k\rangle|, \quad  \forall 1\leq j, k \leq n, 
\end{align*}
Inequality   (\ref{PE}) gives
\begin{align}\label{PN}
	\|\tau_j-\tau_k\|\geq \sqrt{2(1-\cos \theta)},	\quad  \forall 1\leq j, k \leq n,  j \neq k.
\end{align}
However, note that Inequality (\ref{PN}) may not give Inequality   (\ref{PE}). Note that we can formulate the definition of  general p-adic $(d,n,\theta)$-spherical code by replacing Inequality (\ref{PE})  with Inequality (\ref{PN}) in Definition \ref{PCODEDEFINITION}. It is also possible to consider Definition \ref{PCODEDEFINITION} by removing conditions (i) or (ii).
Following is the p-adic version of Theorem \ref{PFENDERT}.
\begin{theorem} (\textbf{p-adic Delsarte-Goethals-Seidel-Kabatianskii-Levenshtein-Pfender Spherical Codes Bound}) \label{PDGSP}
	Let $\{\tau_j\}_{j=1}^n$  be a  p-adic $(d,n,\theta )$-spherical code in $\mathbb{Q}_p^d$. Let $c>0$ and $\phi:[0, \infty)\to \mathbb{R}$ be a function satisfying following.
	\begin{enumerate}[\upshape(i)]
		\item $\sum_{1\leq j,k \leq n}\phi(|2-2\langle \tau_j, \tau_k\rangle|)\geq 0$.
		\item $\phi(r)+c\leq 0$ for all $ r \in [2(1-\cos \theta), \infty)$.
	\end{enumerate}
	Then 
	\begin{align*}
		n\leq \frac{\phi(0)+c}{c}.
	\end{align*}
	In particular, if $\phi(0)+c\leq 1$, then $n\leq 1/c$.	
\end{theorem}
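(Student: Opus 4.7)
The plan is to mimic Pfender's one-line argument essentially verbatim, using the two defining properties of a p-adic spherical code that replace the Euclidean facts: namely $\langle \tau_j,\tau_j\rangle = 1$ (so the ``diagonal'' argument of $\phi$ is $0$), and the lower bound $|2-2\langle\tau_j,\tau_k\rangle|\geq 2(1-\cos\theta)$ for $j\neq k$ (which plays the role of the inner product upper bound in the real case and triggers hypothesis (ii)).

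First I would split the double sum in hypothesis (i) along the diagonal. For $j=k$, condition (ii) in Definition \ref{PCODEDEFINITION} gives $\langle \tau_j,\tau_j\rangle = 1$, so the argument of $\phi$ is $|2-2\cdot 1| = 0$, and each diagonal term equals $\phi(0)$, contributing $n\phi(0)$. For $j\neq k$, inequality (\ref{PE}) places the argument in $[2(1-\cos\theta),\infty)$, so by hypothesis (ii) on $\phi$, each off-diagonal summand is at most $-c$.

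Combining these, I would add $cn^2$ to both sides (or equivalently replace $\phi$ by $\phi+c$) and chain the two inequalities:
\begin{align*}
cn^{2} \;\leq\; \sum_{j,k=1}^{n}\bigl(\phi(|2-2\langle\tau_j,\tau_k\rangle|)+c\bigr) \;=\; n(\phi(0)+c)+\sum_{j\neq k}\bigl(\phi(|2-2\langle\tau_j,\tau_k\rangle|)+c\bigr) \;\leq\; n(\phi(0)+c),
\end{align*}
where the first inequality uses (i) and the last uses (ii). Dividing by $n$ and then by $c>0$ yields $n\leq(\phi(0)+c)/c$, and the ``in particular'' statement is immediate.

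There is essentially no obstacle here: the non-Archimedean valuation, the p-adic inner product axioms, and the structure of $\mathbb{Q}_p^d$ are never used beyond the fact that $|2-2\langle\tau_j,\tau_j\rangle|=0$, which is baked into the definition. The only conceptual check is that the argument $|2-2\langle\tau_j,\tau_k\rangle|$ always lies in $[0,\infty)$, the domain of $\phi$; this is automatic because $|\cdot|$ denotes the p-adic absolute value taking values in $[0,\infty)$. Thus the proof really is a one-liner, and the novelty lies in having set up Definition \ref{PCODEDEFINITION} so that the Euclidean identity $\|\tau-\omega\|^2 = 2-2\langle\tau,\omega\rangle$ is replaced by the p-adic inequality $\|\tau-\omega\|^2\geq |2-2\langle\tau,\omega\rangle|$ while still allowing the diagonal argument to collapse to $0$.
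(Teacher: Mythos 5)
Your proof is correct and follows essentially the same route as the paper: split the double sum along the diagonal, use $\langle\tau_j,\tau_j\rangle=1$ to evaluate the diagonal terms as $\phi(0)$, bound the off-diagonal terms by $-c$ via hypothesis (ii), and combine with hypothesis (i) to get $cn^2\leq n(\phi(0)+c)$. The paper phrases this by introducing $\psi=\phi+c$, but that is only a cosmetic difference from your ``add $cn^2$ to both sides'' formulation.
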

\begin{proof}
	Define $\psi:[0, \infty)\ni r \mapsto \psi(r)\coloneqq \phi(r)+c\in \mathbb{R}$. Then 
	\begin{align*}
		\sum_{1\leq j,k \leq n}\psi(|2-2\langle \tau_j, \tau_k\rangle|)&=\sum_{j=1}^{n}\psi(|2-2\langle \tau_j, \tau_j\rangle|)+\sum_{1\leq j,k \leq n, j \neq k}\psi(|2-2\langle \tau_j, \tau_k\rangle|)\\
		&=\sum_{j=1}^{n}\psi(0)+\sum_{1\leq j,k \leq n, j \neq k}\psi(|2-2\langle \tau_j, \tau_k\rangle|)\\
		&=n(\phi(0)+c)+\sum_{1\leq j,k \leq n, j \neq k}(\phi(|2-2\langle \tau_j, \tau_k\rangle|+c)\\
		&\leq n(\phi(0)+c)+0=n(\phi(0)+c).
	\end{align*}
	We also have 
	\begin{align*}
		\sum_{1\leq j,k \leq n}\psi(\phi(|2-2\langle \tau_j, \tau_k\rangle|))=\sum_{1\leq j,k \leq n}(\phi(|2-2\langle \tau_j, \tau_k\rangle|)+c)=\sum_{1\leq j,k \leq n}\phi(|2-2\langle \tau_j, \tau_k\rangle|)+cn^2.
	\end{align*}
	Therefore 
	\begin{align*}
		cn^2\leq \sum_{1\leq j,k \leq n}\phi(|2-2\langle \tau_j, \tau_k\rangle|)+cn^2\leq n(\phi(0)+c).
	\end{align*}
\end{proof}
\begin{corollary} (\textbf{p-adic Delsarte-Goethals-Seidel-Kabatianskii-Levenshtein-Pfender Kissing Number Bound})
	Let $\{\tau_j\}_{j=1}^n$  be a  p-adic $(d,n,\pi/3)$-spherical code in $\mathbb{Q}_p^d$. Let $c>0$ and $\phi:[0, \infty)\to \mathbb{R}$ be a function satisfying following.
	\begin{enumerate}[\upshape(i)]
		\item $\sum_{1\leq j,k \leq n}\phi(|2-2\langle \tau_j, \tau_k\rangle|)\geq 0$.
		\item $\phi(r)+c\leq 0$ for all $ r \in [1, \infty)$.
	\end{enumerate}
	Then 
	\begin{align*}
		n\leq \frac{\phi(0)+c}{c}.
	\end{align*}
	In particular, if $\phi(0)+c\leq 1$, then $n\leq 1/c$.	
\end{corollary}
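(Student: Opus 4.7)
The plan is to obtain this corollary as an immediate specialization of the preceding Theorem (the p-adic Delsarte-Goethals-Seidel-Kabatianskii-Levenshtein-Pfender Spherical Codes Bound). Since the statement of the corollary is exactly that theorem with the parameter $\theta$ fixed to $\pi/3$, there is essentially no new content to prove; the work is purely a matter of translating the hypothesis on the forbidden interval.

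First I would compute $\cos(\pi/3) = 1/2$, so that $2(1-\cos\theta) = 1$. This identifies the interval $[2(1-\cos\theta), \infty)$ appearing in hypothesis (ii) of the theorem with the interval $[1, \infty)$ stated in the corollary. Hence the hypothesis $\phi(r)+c \leq 0$ for $r \in [1, \infty)$ in the corollary is literally the hypothesis of the theorem for $\theta = \pi/3$.

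Next, noting that a p-adic $(d,n,\pi/3)$-spherical code in $\mathbb{Q}_p^d$ is by definition a p-adic $(d,n,\theta)$-spherical code with $\theta = \pi/3$, and that hypothesis (i) is identical in both statements, I can invoke the theorem to conclude $n \leq (\phi(0)+c)/c$, and in particular $n \leq 1/c$ whenever $\phi(0)+c \leq 1$.

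There is no real obstacle here: the corollary is a direct instantiation, and I expect the entire proof to be a single sentence of the form ``Apply the previous theorem with $\theta = \pi/3$, noting that $2(1-\cos(\pi/3)) = 1$.''
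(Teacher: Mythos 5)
Your proposal is correct and matches the paper's intent exactly: the corollary is stated immediately after Theorem \ref{PDGSP} with no separate proof, precisely because it is the instantiation $\theta=\pi/3$, where $2(1-\cos(\pi/3))=1$ converts the interval $[2(1-\cos\theta),\infty)$ into $[1,\infty)$. Nothing further is needed.
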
	
Following generalization of Theorem \ref{PDGSP} is easy.
\begin{theorem}
	Let $\{\tau_j\}_{j=1}^n$  be a  p-adic $(d,n,\theta )$-spherical code in $\mathbb{Q}_p^d$. Let $c>0$ and 
	\begin{align*}
		\phi:\{|2-2\langle \tau_j, \tau_k\rangle|:1\leq j, k \leq n\}\to \mathbb{R}	
	\end{align*}
	be a function satisfying following.
	\begin{enumerate}[\upshape(i)]
		\item $\sum_{1\leq j,k \leq n}\phi(|2-2\langle \tau_j, \tau_k\rangle|)\geq 0$.
		\item $\phi(r)+c\leq 0$ for all $ r \in \{|2-2\langle \tau_j, \tau_k\rangle|:1\leq j, k \leq n, j \neq k\}$.
	\end{enumerate}
	Then 
	\begin{align*}
		n\leq \frac{\phi(0)+c}{c}.
	\end{align*}
	In particular, if $\phi(0)+c\leq 1$, then $n\leq 1/c$.	
\end{theorem}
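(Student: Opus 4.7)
The plan is to imitate the proof of Theorem \ref{PDGSP} almost verbatim, after noticing that the argument there only ever evaluates $\phi$ at two kinds of arguments: the diagonal value $|2-2\langle\tau_j,\tau_j\rangle|$ (which equals $0$ by Definition \ref{PCODEDEFINITION}(ii)) and the off-diagonal values $|2-2\langle\tau_j,\tau_k\rangle|$ for $j\neq k$. Both kinds already live inside the finite set $\{|2-2\langle\tau_j,\tau_k\rangle|:1\leq j,k\leq n\}$, so shrinking the domain of $\phi$ from $[0,\infty)$ to this finite set costs nothing.

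First I would introduce the shifted function $\psi(r)\coloneqq\phi(r)+c$ on this finite domain, and split the double sum
\begin{align*}
\sum_{1\leq j,k\leq n}\psi(|2-2\langle\tau_j,\tau_k\rangle|)
\end{align*}
into its diagonal and off-diagonal parts. For the diagonal I would use $\langle\tau_j,\tau_j\rangle=1$ to collapse each summand to $\psi(0)=\phi(0)+c$, producing $n(\phi(0)+c)$. For the off-diagonal part I would apply the new hypothesis (ii): since each off-diagonal argument $|2-2\langle\tau_j,\tau_k\rangle|$ lies by definition in the set on which $\phi(r)+c\leq 0$, the off-diagonal sum is nonpositive, yielding the upper estimate $\sum_{j,k}\psi\leq n(\phi(0)+c)$.

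Next I would compute the same total sum in a second way as $\sum_{j,k}\phi(|2-2\langle\tau_j,\tau_k\rangle|)+cn^{2}$, which by hypothesis (i) is at least $cn^{2}$. Chaining the two estimates gives $cn^{2}\leq n(\phi(0)+c)$, and dividing by $cn$ yields $n\leq(\phi(0)+c)/c$, with the special case $n\leq 1/c$ when $\phi(0)+c\leq 1$.

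There is essentially no obstacle here; the result is a cosmetic strengthening of Theorem \ref{PDGSP} in which one simply observes that the original proof never uses the values of $\phi$ outside the finite multiset of inner-product absolute values produced by the code. The only sanity check worth stating explicitly is that the diagonal argument is indeed $0$, so that $\phi(0)$ appears in the domain, and that the off-diagonal arguments automatically fall in the set where condition (ii) is imposed; both are immediate from Definition \ref{PCODEDEFINITION} and from how the domain of $\phi$ is prescribed.
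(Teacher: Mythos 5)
Your proposal is correct and follows exactly the route the paper intends: the paper gives no separate proof for this theorem, remarking only that it follows easily by the same argument as Theorem \ref{PDGSP}, which is precisely what you carry out (shift by $c$, split the double sum into diagonal and off-diagonal parts, and chain the two estimates to get $cn^{2}\leq n(\phi(0)+c)$). Your explicit check that the diagonal argument equals $0$ and that the off-diagonal arguments lie in the set where condition (ii) applies is a sensible addition but not a departure from the paper's approach.
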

Note that, in the paper, we can replace $\mathbb{Q}_p^d$ by any p-adic Hilbert space.

 \bibliographystyle{plain}
 \bibliography{reference.bib}

\begin{thebibliography}{10}

\bibitem{ANSTREICHER}
Kurt~M. Anstreicher.
\newblock The thirteen spheres: a new proof.
\newblock {\em Discrete Comput. Geom.}, 31(4):613--625, 2004.

\bibitem{BACHOCVELLENTIN}
Christine Bachoc and Frank Vallentin.
\newblock New upper bounds for kissing numbers from semidefinite programming.
\newblock {\em J. Amer. Math. Soc.}, 21(3):909--924, 2008.

\bibitem{BACHOCVALLENTIN2}
Christine Bachoc and Frank Vallentin.
\newblock Semidefinite programming, multivariate orthogonal polynomials, and
  codes in spherical caps.
\newblock {\em European J. Combin.}, 30(3):625--637, 2009.

\bibitem{BANNAIBANNAI}
Eiichi Bannai and Etsuko Bannai.
\newblock A survey on spherical designs and algebraic combinatorics on spheres.
\newblock {\em European J. Combin.}, 30(6):1392--1425, 2009.

\bibitem{BANNAISLOANE}
Eiichi Bannai and N.~J.~A. Sloane.
\newblock Uniqueness of certain spherical codes.
\newblock {\em Canadian J. Math.}, 33(2):437--449, 1981.

\bibitem{BARGMUSIN}
Alexander Barg and Oleg~R. Musin.
\newblock Codes in spherical caps.
\newblock {\em Adv. Math. Commun.}, 1(1):131--149, 2007.

\bibitem{BOROCZKY}
K\'{a}roly B\"{o}r\"{o}czky.
\newblock The {N}ewton-{G}regory problem revisited.
\newblock In {\em Discrete geometry}, volume 253, pages 103--110. Dekker, New
  York, 2003.

\bibitem{BOROCZKYGLAZYRIN}
K\'{a}roly~J. B\"{o}r\"{o}czky and Alexey Glazyrin.
\newblock Stability of optimal spherical codes.
\newblock {\em Monatsh. Math.}, 205(3):455--475, 2024.

\bibitem{BOYVALENKOVDRAGNEVHARDINSTOYANOVA2}
P.~G. Boyvalenkov, P.~D. Dragnev, D.~P. Hardin, E.~B. Saff, and M.~M.
  Stoyanova.
\newblock Universal lower bounds for potential energy of spherical codes.
\newblock {\em Constr. Approx.}, 44(3):385--415, 2016.

\bibitem{BOYVALENKOVDRAGNEVHARDINSTOYANOVA}
P.~G. Boyvalenkov, P.~D. Dragnev, D.~P. Hardin, E.~B. Saff, and M.~M.
  Stoyanova.
\newblock Bounds for spherical codes: the {L}evenshtein framework lifted.
\newblock {\em Math. Comp.}, 90(329):1323--1356, 2021.

\bibitem{BOYVALENKOVDANEV}
Peter Boyvalenkov and Danyo Danev.
\newblock On maximal codes in polynomial metric spaces.
\newblock In {\em Applied algebra, algebraic algorithms and error-correcting
  codes}, volume 1255 of {\em Lecture Notes in Comput. Sci.}, pages 29--38.
  Springer, Berlin, 1997.

\bibitem{BOYVALENKOVDANEVLANDGEV}
Peter Boyvalenkov, Danyo Danev, and Ivan Landgev.
\newblock On maximal spherical codes. {II}.
\newblock {\em J. Combin. Des.}, 7(5):316--326, 1999.

\bibitem{BOYVALENKOVDODUNEKOVMUSIN}
Peter Boyvalenkov, Stefan Dodunekov, and Oleg Musin.
\newblock A survey on the kissing numbers.
\newblock {\em Serdica Math. J.}, 38(4):507--522, 2012.

\bibitem{CASSELMAN}
Bill Casselman.
\newblock The difficulties of kissing in three dimensions.
\newblock {\em Notices Amer. Math. Soc.}, 51(8):884--885, 2004.

\bibitem{COHNJIAOKUMARTORQUATO}
Henry Cohn, Yang Jiao, Abhinav Kumar, and Salvatore Torquato.
\newblock Rigidity of spherical codes.
\newblock {\em Geom. Topol.}, 15(4):2235--2273, 2011.

\bibitem{COHNZHAO}
Henry Cohn and Yufei Zhao.
\newblock Sphere packing bounds via spherical codes.
\newblock {\em Duke Math. J.}, 163(10):1965--2002, 2014.

\bibitem{CONWAYSLOANE}
J.~H. Conway and N.~J.~A. Sloane.
\newblock {\em Sphere packings, lattices and groups}.
\newblock Springer-Verlag, New York, 1999.

\bibitem{DELSARTEGOETHALSSEIDEL}
P.~Delsarte, J.~M. Goethals, and J.~J. Seidel.
\newblock Spherical codes and designs.
\newblock {\em Geometriae Dedicata}, 6(3):363--388, 1977.

\bibitem{DIAGANABOOK}
Toka Diagana.
\newblock {\em Non-{A}rchimedean linear operators and applications}.
\newblock Nova Science Publishers, Inc., Huntington, NY, 2007.

\bibitem{DIAGANARAMAROSON}
Toka Diagana and Francois Ramaroson.
\newblock {\em Non-{A}rchimedean operator theory}.
\newblock SpringerBriefs in Mathematics. Springer, Cham, 2016.

\bibitem{ERICSONZINOVIEV}
Thomas Ericson and Victor Zinoviev.
\newblock {\em Codes on {E}uclidean spheres}, volume~63 of {\em North-Holland
  Mathematical Library}.
\newblock North-Holland Publishing Co., Amsterdam, 2001.

\bibitem{GLAZYRIN}
Alexey Glazyrin.
\newblock A short solution of the kissing number problem in dimension three.
\newblock {\em Discrete Comput. Geom.}, 69(3):931--935, 2023.

\bibitem{JENSSENJOOSPERKINS}
Matthew Jenssen, Felix Joos, and Will Perkins.
\newblock On kissing numbers and spherical codes in high dimensions.
\newblock {\em Adv. Math.}, 335:307--321, 2018.

\bibitem{KALISCH}
G.~K. Kalisch.
\newblock On {$p$}-adic {H}ilbert spaces.
\newblock {\em Ann. of Math. (2)}, 48:180--192, 1947.

\bibitem{KALLALKANWANG}
Kenz Kallal, Tomoka Kan, and Eric Wang.
\newblock Improved lower bounds for kissing numbers in dimensions 25 through
  31.
\newblock {\em SIAM J. Discrete Math.}, 31(3):1895--1908, 2017.

\bibitem{KUKLIN}
N.~A. Kuklin.
\newblock Delsarte method in the problem on kissing numbers in high-dimensional
  spaces.
\newblock {\em Proc. Steklov Inst. Math.}, 284(1):S108--S123, 2014.

\bibitem{LEECH}
John Leech.
\newblock The problem of the thirteen spheres.
\newblock {\em Math. Gaz.}, 40:22--23, 1956.

\bibitem{LIBERTI}
Leo Liberti.
\newblock Mathematical programming bounds for kissing numbers.
\newblock In {\em Optimization and decision science: methodologies and
  applications}, volume 217 of {\em Springer Proc. Math. Stat.}, pages
  213--222. Springer, Cham, 2017.

\bibitem{MACHADOFILHO}
Fabr\'{\i}cio~Caluza Machado and Fernando~M\'{a}rio de~Oliveira~Filho.
\newblock Improving the semidefinite programming bound for the kissing number
  by exploiting polynomial symmetry.
\newblock {\em Exp. Math.}, 27(3):362--369, 2018.

\bibitem{MAEHARA}
H.~Maehara.
\newblock The problem of thirteen spheres---a proof for undergraduates.
\newblock {\em European J. Combin.}, 28(6):1770--1778, 2007.

\bibitem{MITTELMANNVALLENTIN}
Hans~D. Mittelmann and Frank Vallentin.
\newblock High-accuracy semidefinite programming bounds for kissing numbers.
\newblock {\em Experiment. Math.}, 19(2):175--179, 2010.

\bibitem{MUSIN3}
O.~R. Musin.
\newblock Bounds for codes by semidefinite programming.
\newblock {\em Tr. Mat. Inst. Steklova}, 263:143--158, 2008.

\bibitem{MUSIN1}
Oleg~R. Musin.
\newblock The kissing problem in three dimensions.
\newblock {\em Discrete Comput. Geom.}, 35(3):375--384, 2006.

\bibitem{MUSIN2}
Oleg~R. Musin.
\newblock The kissing number in four dimensions.
\newblock {\em Ann. of Math. (2)}, 168(1):1--32, 2008.

\bibitem{ODLYZKOSLOANE}
A.~M. Odlyzko and N.~J.~A. Sloane.
\newblock New bounds on the number of unit spheres that can touch a unit sphere
  in {$n$} dimensions.
\newblock {\em J. Combin. Theory Ser. A}, 26(2):210--214, 1979.

\bibitem{PFENDER}
Florian Pfender.
\newblock Improved {D}elsarte bounds for spherical codes in small dimensions.
\newblock {\em J. Combin. Theory Ser. A}, 114(6):1133--1147, 2007.

\bibitem{PFENDERZIEGLER}
Florian Pfender and G\"{u}nter~M. Ziegler.
\newblock Kissing numbers, sphere packings, and some unexpected proofs.
\newblock {\em Notices Amer. Math. Soc.}, 51(8):873--883, 2004.

\bibitem{SAMORODNITSKY}
Alex Samorodnitsky.
\newblock On linear programming bounds for spherical codes and designs.
\newblock {\em Discrete Comput. Geom.}, 31(3):385--394, 2004.

\bibitem{SARDARIZARGAR}
Naser~Talebizadeh Sardari and Masoud Zargar.
\newblock New upper bounds for spherical codes and packings.
\newblock {\em Math. Ann.}, 389(4):3653--3703, 2024.

\bibitem{SCHUTTEVANDERWAERDEN}
K.~Sch\"{u}tte and B.~L. van~der Waerden.
\newblock Das {P}roblem der dreizehn {K}ugeln.
\newblock {\em Math. Ann.}, 125:325--334, 1953.

\bibitem{SLOANE}
N.~J.~A. Sloane.
\newblock Tables of sphere packings and spherical codes.
\newblock {\em IEEE Trans. Inform. Theory}, 27(3):327--338, 1981.

\bibitem{ZONG}
Chuanming Zong.
\newblock {\em Sphere packings}.
\newblock Universitext. Springer-Verlag, New York, 1999.

\end{thebibliography}

\end{document}